\documentclass{amsart}
%%%%%%%%%%%%%%%%%%%%%%%%%%%%%%%%%%%%%%%%%%%%%%%%%%%%%%%%%%%%%%%%%%%%%%%%%%%%%%%%%%%%%%%%%%%%%%%%%%%%%%%%%%%%%%%%%%%%%%%%%%%%%%%%%%%%%%%%%%%%%%%%%%%%%%%%%%%%%%%%%%%%%%%%%%%%%%%%%%%%%%%%%%%%%%%%%%%%%%%%%%%%%%%%%%%%%%%%%%%%%%%%%%%%%%%%%%%%%%%%%%%%%%%%%%%%
\usepackage{amssymb}
\usepackage{graphicx}
\usepackage{amscd}

\setcounter{MaxMatrixCols}{10}
%TCIDATA{OutputFilter=LATEX.DLL}
%TCIDATA{Version=5.50.0.2890}
%TCIDATA{<META NAME="SaveForMode" CONTENT="1">}
%TCIDATA{BibliographyScheme=Manual}
%TCIDATA{Created=Wed May 14 00:42:31 2003}
%TCIDATA{LastRevised=Friday, April 10, 2009 12:35:31}
%TCIDATA{<META NAME="GraphicsSave" CONTENT="32">}
%TCIDATA{<META NAME="DocumentShell" CONTENT="Journal Articles\AMS Journal Article">}
%TCIDATA{Language=American English}
%TCIDATA{CSTFile=amsartci.cst}

\newtheorem{theorem}{Theorem}
\theoremstyle{plain}

\newtheorem{lemma}{Lemma}

\newtheorem{proposition}{Proposition}

\numberwithin{equation}{section}

\input{tcilatex}

\begin{document}
\title[Topological Structure on AG-groupoids]{Topological Structure on
Abel-Grassmann's Groupoids}
\author{Qaiser Mushtaq and Madad Khan}
\address{Department of mathematics, Quaid-i-Azam University, Islamabad,
Pakistan.}
\email{qmushtaq@isb.apollo.net.pk}
\email{madadmath@yahoo.com}
\subjclass[2000]{$20$M$10$ and $20$N$99$}
\keywords{AG-groupoid, Anti-rectangular band, Medial law, bi-ideals and
prime bi-ideals.}

\begin{abstract}
In this paper we have discussed the ideals in Abel Grassmann's groupoids and
construct their topologies.
\end{abstract}

\maketitle

\section{Introduction}

An Abel-Grassmann's groupoid $($AG-groupoid$)$ $[3]$ is a groupoid $S$ with
left invertive law

\begin{equation}
(ab)c=(cb)a\text{, for all }a\text{, }b\text{, }c\in S\text{.}  \tag{1}
\end{equation}

Every AG-groupoid $S$ satisfy the medial law $[2]$

\begin{equation}
(ab)(cd)=(ac)(bd)\text{,}\ \text{for all }a,b,c,d\in S\text{.}  \tag{2}
\end{equation}

In every AG-groupoid with left identity the following law $[3]$ holds%
\begin{equation}
(ab)(cd)=(db)(ca)\text{,}\ \text{for all }a,b,c,d\in S\text{.}  \tag{3}
\end{equation}

Many characteristics of several non-associative AG-groupoids similar to a
commutative semigroup.

The aim of this note is to define the topological spaces using ideal theory.
Several ideals concerning the number of occurrence of topological spaces in
AG-groupoids. The topological spaces formation guarantee for the
preservation of finite intersection and arbitrary union between the set of
ideals and the open subsets of resultant topologies.

A subset $I$ of an AG-groupoid $S$ is called a right (left) ideal if $%
IS\subseteq I$ $(SI\subseteq I)$, and is called an ideal if it is two sided
ideal, if $I$ is a left ideal of $S$ then $I^{2}$ becomes an ideal of $S$.
By a bi-ideal of an AG-groupoid $S$, we mean a sub AG-groupoid $B$ of $S$
such that $(BS)B\subseteq B$. It is easy to note that each right ideal is a
bi-ideal. If $S$ has a left identity then it is not hard to show that $B^{2}$
is a bi-ideal of $S$ and $B^{2}\subseteq SB^{2}=B^{2}S$. If $E(B_{S})$
denote the set of all idempotents subsets of $S$ with left identity $e$,
then $E(B_{S})$ form a semilattice structure also if $C=C^{2}$ then $%
(CS)C\in E(B_{S})$. The intersection of any set of bi-ideals of an
AG-groupoid $S$ is either empty or a bi-ideal of $S$. Also the intersection
of prime bi-ideals of an AG-groupoid $S$ is a semiprime bi-ideal of $S$.

If $S$ is an AG-groupoid with left identity $e$, and assume that $%
a^{3}=a^{2}a$ then 
\begin{equation*}
(x_{1}^{m}x_{2}^{n})(x_{3}^{q}x_{4}^{r})=(x_{p(1)}^{m}x_{p(2)}^{n})(x_{P(3)}^{q}x_{P(4)}^{r})%
\text{, for }m,n,q,r\geq 2\text{,}
\end{equation*}%
where $\{p(1),p(2),p(3),p(4)\}$ means any permutation on the set $%
\{1,2,3,4\} $. As a consequence $%
(x_{1}x_{2}x_{3}x_{4})^{k}=(x_{p(1)}x_{p(2)}x_{P(3)}x_{P(4)})^{k}$, for $%
k\geq 2$. The result can be generalized for finite numbers of elements of $S$%
. If $0\in S$, then $0s=s0=0$, for all $s$ in $S$.

\begin{proposition}
Let $T$ be a left ideal and $B$ is a bi-ideal of an AG-groupoid $S$ with
left identity , then $BT$ and $T^{2}B$ are bi-ideals of $S$.
\end{proposition}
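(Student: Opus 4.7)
The plan is to verify, for each of $X=BT$ and $X=T^{2}B$, the two bi-ideal conditions $XX\subseteq X$ and $(XS)X\subseteq X$. In both cases the engine will be the medial law~$(2)$, the left invertive law~$(1)$, and identity~$(3)$ available because $S$ has a left identity.

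For $X=BT$, closure $(BT)(BT)\subseteq BT$ is a one-line application of medial: $(BT)(BT)=(BB)(TT)$, which lies in $BT$ since $BB\subseteq B$ and $TT\subseteq ST\subseteq T$. For the sandwich condition I will first apply left invertive to get $(BT)S=(ST)B\subseteq TB$, reducing the problem to $(TB)(BT)\subseteq BT$. Here I will exploit the left identity: write $T=eT$, use left invertive to turn $(eT)B$ into $(BT)e$, and then apply medial to reduce $((BT)e)(BT)$ to $((BT)B)(eT)=((BT)B)T$. Finally $(BT)B\subseteq(BS)B\subseteq B$ by the bi-ideal property of $B$, so the product lies in $BT$.

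For $X=T^{2}B=(TT)B$, closure reduces by medial to $((TT)(TT))(BB)$; one further medial step gives $(TT)(TT)\subseteq TT$, and combined with $BB\subseteq B$ this yields $(TT)B$. The sandwich condition is the main obstacle. I will first apply left invertive to obtain $(T^{2}B)S=(SB)(TT)$, then identity~$(3)$ to the outer product, giving
\[
((SB)(TT))((TT)B)=(B(TT))((TT)(SB)).
\]
On the right factor, identity~$(3)$ together with $ST\subseteq T$ shows $(TT)(SB)=(BT)(ST)\subseteq(BT)T$, and left invertive gives $(BT)T=(TT)B$. On the left factor, writing $B=eB$ and applying medial, $B(TT)=(eB)(TT)=(eT)(BT)=T(BT)$. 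After these substitutions the problem reduces to $(T(BT))((TT)B)\subseteq T^{2}B$, and one more medial step produces $(T(TT))((BT)B)$; the first factor lies in $TT$ by closure of $T$, and the second in $B$ by the bi-ideal condition on $B$, placing the whole product in $(TT)B=T^{2}B$.

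The hard part is this last reduction for $T^{2}B$: the natural applications of medial or~$(3)$ to $((SB)(TT))((TT)B)$ tend to cycle back to the same expression because of the symmetric placement of $TT$ on both sides, so progress depends on breaking the symmetry by introducing the left identity at the right step.
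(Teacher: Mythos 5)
Your argument is correct: every application of the medial law $(2)$, the left invertive law $(1)$, and identity $(3)$ checks out, and the final containments $(BT)B\subseteq(BS)B\subseteq B$, $ST\subseteq T$, $TT\subseteq T$ are exactly what is needed. However, you take a genuinely longer route than the paper. For $BT$, the paper applies medial once to the whole expression, $((BT)S)(BT)=((BT)B)(ST)\subseteq((BS)B)T\subseteq BT$, with no need to rewrite $(BT)S$ via left invertive or to introduce $e$ at all. For $T^{2}B$, the paper writes $S=SS$ and uses medial twice: $((T^{2}B)S)(T^{2}B)=((T^{2}S)(BS))(T^{2}B)\subseteq(T^{2}(BS))(T^{2}B)=(T^{2}T^{2})((BS)B)\subseteq T^{2}B$, the only auxiliary fact being $T^{2}S=(ST)T\subseteq T^{2}$ (left invertive plus $ST\subseteq T$), which the paper leaves implicit. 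Your detour through $(T^{2}B)S=(SB)(TT)$ followed by identity $(3)$ and the insertion of $e$ does break the symmetry you worried about, but the symmetry never needs to arise: grouping the outer product so that medial pairs $T^{2}$ with $T^{2}$ and $BS$ with $B$ resolves everything in two steps. The paper's version also avoids identity $(3)$ entirely, so it would survive with weaker hypotheses wherever $(3)$ is unavailable; your version is a valid alternative but gains nothing in generality since both still require the left identity (yours explicitly, the paper's via $S=SS$ and $(BS)B\subseteq B$). One cosmetic point: your remark that ``one further medial step gives $(TT)(TT)\subseteq TT$'' is misstated --- that containment follows simply from $TT\subseteq ST\subseteq T$, not from medial.
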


\begin{proof}
Using $(2$), we get%
\begin{eqnarray*}
((BT)S)(BT) &=&((BT)B)(ST)\subseteq ((BS)B)T\subseteq BT, \\
\text{also }(BT)(BT) &=&(BB)(TT)\subseteq BT\text{.}
\end{eqnarray*}%
Hence $BT$ is a bi-ideal of $S$. Now using $(2)$, we obtain%
\begin{eqnarray*}
((T^{2}B)S)(T^{2}B) &=&((T^{2}S)(BS))(T^{2}B)\subseteq (T^{2}(BS))(T^{2}B) \\
&=&(T^{2}T^{2})((BS)B)\subseteq T^{2}B\text{, also} \\
(T^{2}B)(T^{2}B) &=&(T^{2}T^{2})(BB)\subseteq T^{2}B\text{.}
\end{eqnarray*}%
Hence $T^{2}B$ is a bi-ideal of $S$.
\end{proof}

\begin{proposition}
The product of two bi-ideals of an AG-groupoid $S$ with left identity is a
bi-ideal of $S$.
\end{proposition}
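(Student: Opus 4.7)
Let $B_{1}$ and $B_{2}$ be bi-ideals of $S$. I need to verify the two defining conditions for $B_{1}B_{2}$: that it is a sub AG-groupoid of $S$, and that $((B_{1}B_{2})S)(B_{1}B_{2})\subseteq B_{1}B_{2}$.

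The first condition is handled immediately by the medial law $(2)$: writing
\begin{equation*}
(B_{1}B_{2})(B_{1}B_{2})=(B_{1}B_{1})(B_{2}B_{2})\subseteq B_{1}B_{2},
\end{equation*}
since each $B_{i}$ is a sub AG-groupoid. So $B_{1}B_{2}$ is closed under the operation.

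For the bi-ideal condition, the main idea is to split the middle factor $S$ as $SS$ so that the medial law can be applied twice. The existence of a left identity $e$ gives $S=eS\subseteq SS\subseteq S$, hence $S=SS$. This is the one step in the argument that is not a brute rewriting, and I expect it to be the key observation the reader might miss. Then
\begin{equation*}
(B_{1}B_{2})S=(B_{1}B_{2})(SS)=(B_{1}S)(B_{2}S)
\end{equation*}
by $(2)$, and a second application of $(2)$ with $a=B_{1}S$, $b=B_{2}S$, $c=B_{1}$, $d=B_{2}$ yields
\begin{equation*}
((B_{1}S)(B_{2}S))(B_{1}B_{2})=((B_{1}S)B_{1})((B_{2}S)B_{2})\subseteq B_{1}B_{2},
\end{equation*}
where the last inclusion uses the bi-ideal property $(B_{i}S)B_{i}\subseteq B_{i}$ for $i=1,2$.

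Combining the two inclusions establishes that $B_{1}B_{2}$ is a bi-ideal of $S$. The whole argument is just two invocations of the medial law, bracketed by the observation $S=SS$; no appeal to $(1)$ or $(3)$ is needed, and there is no real obstacle once one thinks to rewrite $S$ as $SS$.
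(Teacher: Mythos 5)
Your proof is correct and follows essentially the same route as the paper's: rewrite the middle $S$ as $SS$ and apply the medial law twice to get $((B_{1}S)B_{1})((B_{2}S)B_{2})\subseteq B_{1}B_{2}$. The only difference is that you also verify closure of $B_{1}B_{2}$ under the operation, a step the paper omits but which is needed for the sub-AG-groupoid part of the definition.
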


\begin{proof}
Using (2), we get%
\begin{eqnarray*}
((B_{1}B_{2})S)(B_{1}B_{2})
&=&((B_{1}B_{2})(SS))(B_{1}B_{2})=((B_{1}S)(B_{2}S))(B_{1}B_{2}) \\
&=&((B_{1}S)B_{1})((B_{2}S)B_{2})\subseteq B_{1}B_{2}\text{.}
\end{eqnarray*}
\end{proof}

If $B_{1}$and $B_{2}$ are non-empty, then $B_{1}B_{2}$ and $B_{2}B_{1}$ are
connected bi-ideals. Also the above Proposition leads us to easy
generalizations that is, if $B_{1}$, $B_{2}$, $B_{3}$,$...$ and $B_{n}$ are
bi-ideals of an AG-groupoid $S$ with left identity, then%
\begin{equation*}
(...((B_{1}B_{2})B_{3})...)B_{n}\text{ and }%
(...((B_{1}^{2}B_{2}^{2})B_{3}^{2})...)B_{n}^{2}
\end{equation*}%
are bi-ideals of $S$, consequently the set $\complement (S_{B})$ of
bi-ideals form an AG-groupoid.

If $S$ an AG-groupoid with left identity $e$ then $\langle a\rangle _{L}=Sa$%
, $\langle a\rangle _{R}=aS$ and $\langle a\rangle _{S}=(Sa)S$ are bi-ideals
of $S$. Now it is not hard to show that $\langle ab\rangle _{L}=\langle
a\rangle _{L}\langle b\rangle _{L}$, $\langle ab\rangle _{R}=\langle
a\rangle _{R}\langle b\rangle _{R}$, and $\langle ab\rangle _{R}=\langle
b\rangle _{L}\langle a\rangle _{L}$, from these it can be deduce that $%
\langle a\rangle _{R}\langle b\rangle _{R}=\langle b\rangle _{L}\langle
a\rangle _{L}$ and $\langle a\rangle _{L}\langle b\rangle _{L}=\langle
b\rangle _{R}\langle a\rangle _{R}$. Also $\langle a\rangle _{L}\langle
b\rangle _{R}=\langle b\rangle _{L}\langle a\rangle _{R}$,$\ \langle
a^{2}\rangle _{L}=\langle a\rangle _{L}^{2}$, $\langle a^{2}\rangle
_{R}=\langle a\rangle _{R}^{2}$, $\langle a^{2}\rangle _{L}=\langle
a^{2}\rangle _{R}$ and $\langle a\rangle _{L}=\langle a\rangle _{R}$ (if $a$
is an idempotent), consequently $\langle a^{2}\rangle _{L}=\langle
a^{2}\rangle _{R}$. It is easy to show that $\langle a\rangle
_{R}a^{2}=a^{2}\langle a\rangle _{L}$.

\begin{lemma}
If $B$ is an idempotent bi-ideal of an AG-groupoid $S$ with left identity,
then $B$ is an ideal of $S$.
\end{lemma}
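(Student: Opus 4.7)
The plan is to derive the two inclusions $SB\subseteq B$ and $BS\subseteq B$ from the idempotence $B=B^{2}$, the bi-ideal property $(BS)B\subseteq B$, the medial law (2), the left invertive law (1), and the presence of a left identity $e$.

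The harder of the two inclusions is $SB\subseteq B$. My first step would be to use $B=BB$ to write an arbitrary element of $B$ as a product $b_{1}b_{2}$ and to insert the left identity via $s=es$. Then the medial law collapses the result:
\[
s(b_{1}b_{2})=(es)(b_{1}b_{2})=(eb_{1})(sb_{2})=b_{1}(sb_{2})\in B(SB),
\]
so $SB\subseteq B(SB)$. Applying $B=BB$ once more on the outer factor, followed by a single application of the medial law and a final collapse $BB=B$, gives
\[
B(SB)=(BB)(SB)=(BS)(BB)=(BS)B,
\]
and the bi-ideal hypothesis now yields $(BS)B\subseteq B$. Combining, $SB\subseteq B$.

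With $SB\subseteq B$ in hand, the remaining inclusion $BS\subseteq B$ is immediate. Using $B=BB$ and the left invertive law (1), $BS=(BB)S=(SB)B$, and since $SB\subseteq B$ we obtain $(SB)B\subseteq BB=B$. Thus $B$ is both a left and a right ideal, hence an ideal.

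The main obstacle I anticipate is the first move: the bi-ideal condition only controls products of the precise shape $(BS)B$, so the entire difficulty lies in massaging $SB$ into that template. The trick is the double use of idempotence --- once to expose an inner $B$ by factoring $b=b_{1}b_{2}$, and once to collapse a $BB$ back to $B$ --- bridged by a single application of the medial law. Everything after that step is routine.
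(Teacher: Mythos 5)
Your proof is correct, and it lands on the same key move as the paper --- forcing the product into the shape $(BS)B$ so that the bi-ideal axiom can fire --- but it executes the reduction in the opposite order and by different identities. The paper attacks $BS$ first: it writes $BS=(BB)S=(SB)B=(SB^{2})B=(B^{2}S)B=(BS)B\subseteq B$, where the pivotal step $SB^{2}=B^{2}S$ is an identity quoted from the introduction rather than proved on the spot; it then dismisses $SB\subseteq B$ with the standard remark that every right ideal of an AG-groupoid with left identity is a left ideal. You instead attack $SB$ first, using the left identity and the medial law elementwise ($s(b_{1}b_{2})=(es)(b_{1}b_{2})=b_{1}(sb_{2})$, i.e.\ the identity $a(bc)=b(ac)$) to reach $B(SB)=(BS)B\subseteq B$, and only then recover $BS\subseteq B$ from $BS=(BB)S=(SB)B\subseteq BB=B$ via the left invertive law. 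The net effect is the same lemma, but your version is more self-contained: it never invokes the unproved commutation $SB^{2}=B^{2}S$, at the price of one extra appeal to idempotence in the final step. Both arguments use the left identity essentially (as they must, since the statement fails without it), so neither is more general than the other.
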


\begin{proof}
Using (1),%
\begin{equation*}
BS=(BB)S=(SB)B=(SB^{2})B=(B^{2}S)B=(BS)B\text{,}
\end{equation*}%
and every right ideal in $S$ with left identity is left.
\end{proof}

\begin{lemma}
If $B$ is a proper bi-ideal of an AG-groupoid $S$ with left identity $e$,
then $e\notin B$.
\end{lemma}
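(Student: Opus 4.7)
The plan is to argue by contradiction: I will assume $e \in B$ and deduce $B = S$, contradicting properness. The key ingredients are the bi-ideal axiom $(BS)B \subseteq B$ and the left invertive law (1); the role of $e \in B$ is that we can slip $e$ into either of the two $B$-slots of $(BS)B$ when convenient, while the left invertive law lets us shuffle factors so that $BS$ and $B$ appear in the correct positions.

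First I would show that $e \in B$ already forces $SB \subseteq B$. Given $s \in S$ and $b \in B$, I use the left identity to write $sb = (es)b$, and then apply (1) to rewrite this as $(bs)e$. Since $bs \in BS$ and $e \in B$, the product $(bs)e$ lies in $(BS)B \subseteq B$, so $sb \in B$.

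Second, I would show that each $s \in S$ lies in $B$, which yields the desired contradiction. For this I would write $s = es = (ee)s$ and apply (1) once more to obtain $s = (se)e$. Since $e \in B$, the first step gives $se \in SB \subseteq B$, and because $B$ is a sub-AG-groupoid we have $(se)e \in BB \subseteq B$. Hence $s \in B$, so $S \subseteq B$ and $B$ cannot be proper.

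The only real subtlety I expect is the asymmetry of the bi-ideal condition: $(BS)B \subseteq B$ absorbs factors on the right but not directly on the left, so one cannot simply conclude $s = es \in B$ from $e \in B$. The left invertive law is exactly what lets us swap inner and outer factors and re-express $sb$ and $s$ in the canonical $(BS)B$-shape, at which point the bi-ideal property takes over.
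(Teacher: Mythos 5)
Your proof is correct and follows essentially the same route as the paper: assume $e\in B$, first deduce $SB\subseteq B$ from the bi-ideal condition $(BS)B\subseteq B$, then write $s=(ee)s=(se)e$ via the left invertive law to conclude $S\subseteq B$, contradicting properness. The only cosmetic difference is that in the first step you rewrite $(es)b$ as $(bs)e$ before invoking $(BS)B\subseteq B$, whereas the paper places $(es)b$ in $(BS)B$ directly using $es\in BS$; both are valid.
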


\begin{proof}
Let $e\in B$, since $sb=(es)b\in B$, now using (1), we get $s=(ee)s=(se)e\in
(SB)B\subseteq B$.
\end{proof}

It is easy to note that $\{x\in S:(xa)x=e\}\nsubseteq B.$

\begin{proposition}
If $A$, $B$ are bi-ideals of an AG-groupoid $S$ with left identity, then the
following assertions are equivalent.

$(i)$ Every bi-ideal of $S$ is idempotent,

$(ii)$ $A\cap B=AB$, and

$(iii)$ the ideals of $S$ form a semilattice $(L_{S},\wedge )$ where $%
A\wedge B=AB$.
\end{proposition}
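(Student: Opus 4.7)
The plan is to prove the equivalence by the cycle $(i)\Rightarrow(ii)\Rightarrow(iii)\Rightarrow(i)$, leveraging three facts already recorded in the excerpt: Proposition 2 (the product of two bi-ideals is a bi-ideal), the remark that the intersection of bi-ideals is either empty or a bi-ideal, and the Lemma that an idempotent bi-ideal is an ideal.

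For $(i)\Rightarrow(ii)$, I would first apply (i) to conclude $A=A^{2}$ and $B=B^{2}$, so by the Lemma both $A$ and $B$ are (two-sided) ideals of $S$. This gives at once $AB\subseteq AS\subseteq A$ and $AB\subseteq SB\subseteq B$, hence $AB\subseteq A\cap B$. For the reverse inclusion, $A\cap B$ is itself a bi-ideal, hence idempotent by (i), so $A\cap B=(A\cap B)(A\cap B)\subseteq AB$.

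For $(ii)\Rightarrow(iii)$, I would verify that $A\wedge B:=AB$ is well-defined on the set of bi-ideals (using Proposition 2, so repeated products stay in the set), and then check the three semilattice axioms by translating each product into an intersection via (ii): idempotence $A\wedge A=AA=A\cap A=A$; commutativity $AB=A\cap B=B\cap A=BA$; and associativity $(AB)C=(A\cap B)\cap C=A\cap(B\cap C)=A(BC)$. Since intersection is associative and commutative, so is $\wedge$, and one then obtains $(L_{S},\wedge)$ as a semilattice. For $(iii)\Rightarrow(i)$, the semilattice idempotence axiom $A\wedge A=A$ reads $A^{2}=A$ for every $A$ in $L_{S}$, which is precisely (i).

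The routine part is the chain of three implications; the only mildly delicate point is ensuring at each step that the objects being compared ($AB$, $A\cap B$, $BC$, and so on) really are bi-ideals, so that the hypotheses of (ii) legitimately apply to them and the operation in (iii) is closed. This is handled uniformly by Proposition 2 together with the intersection remark, so I do not anticipate a substantive obstacle beyond book-keeping.
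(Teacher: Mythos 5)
Your proposal is correct and follows essentially the same route as the paper: the cycle $(i)\Rightarrow(ii)\Rightarrow(iii)\Rightarrow(i)$, with Lemma~1 (idempotent bi-ideals are ideals) giving $AB\subseteq A\cap B$, the idempotence of the bi-ideal $A\cap B$ giving the reverse inclusion, and the semilattice axioms checked by translating products into intersections. You are merely more explicit than the paper about the closure book-keeping (Proposition~2 and the associativity check, which the paper dismisses with ``similarly''), but the argument is the same.
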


\begin{proof}
$(i)\Rightarrow (ii)$: Using Lemma $1$, it is easy to note that $AB\subseteq
A\cap B$. Since $A\cap B\subseteq A,$ $B$ implies $(A\cap B)^{2}\subseteq AB$%
, hence $A\cap B\subseteq AB$.

$(ii)\Rightarrow (iii)$: $A\wedge B=AB=A\cap B=B\cap A=B\wedge A$ and $%
A\wedge A=AA=A\cap A=A$. Similarly, associativity follows. Hence $%
(L_{S},\wedge )$ is a semilattice.

$(iii)\Rightarrow (i)$: 
\begin{equation*}
A=A\wedge A=AA\text{.}
\end{equation*}
\end{proof}

A bi-ideal $B$ of an AG-groupoid $S$ is called a prime bi-ideal if $%
B_{1}B_{2}\subseteq B$ implies either $B_{1}\subseteq B$ or $B_{2}\subseteq
B $ for every bi-ideal $B_{1}$ and $B_{2}$ of $S$. The set of bi-ideals of $%
S $ is totally ordered under inclusion if for all bi-ideals $I$, $J$ either $%
I\subseteq J$ or $J\subseteq I$.

\begin{theorem}
Each bi-ideal of an AG-groupoid $S$ with left identity is prime if and only
if it is idempotent and the set of bi-ideals of $S$ is totally ordered under
inclusion.
\end{theorem}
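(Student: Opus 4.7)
The plan is to prove the two implications separately, leveraging Lemma 1 (idempotent bi-ideals are ideals) together with Proposition 2 (the product of two bi-ideals is a bi-ideal).

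For the forward direction, assume every bi-ideal is prime. To get idempotence, I would fix an arbitrary bi-ideal $B$ and observe that $B^{2}$ is itself a bi-ideal by Proposition 2, with $B\cdot B\subseteq B^{2}$. The primality of $B^{2}$ then forces $B\subseteq B^{2}$, and since $B$ is a sub-AG-groupoid we automatically have $B^{2}\subseteq B$, so $B=B^{2}$. For total ordering, I would take two bi-ideals $B_{1}$, $B_{2}$ and exploit the idempotence just established: by Lemma 1 each $B_{i}$ is an ideal of $S$, which gives the key inclusion
\begin{equation*}
B_{1}B_{2}\;\subseteq\;B_{1}S\cap SB_{2}\;\subseteq\;B_{1}\cap B_{2}.
\end{equation*}
The intersection $B_{1}\cap B_{2}$ is non-empty (it contains the non-empty set $B_{1}B_{2}$) and so is a bi-ideal by the remark in the introduction; applying primality of $B_{1}\cap B_{2}$ to the inclusion above yields $B_{1}\subseteq B_{1}\cap B_{2}$ or $B_{2}\subseteq B_{1}\cap B_{2}$, which is exactly the totally ordered condition.

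For the reverse direction, assume every bi-ideal is idempotent and the bi-ideals are totally ordered. Given a bi-ideal $B$ and bi-ideals $B_{1}$, $B_{2}$ with $B_{1}B_{2}\subseteq B$, the total ordering places, say, $B_{1}\subseteq B_{2}$, and then idempotence gives
\begin{equation*}
B_{1}\;=\;B_{1}^{2}\;\subseteq\;B_{1}B_{2}\;\subseteq\;B,
\end{equation*}
and symmetrically if $B_{2}\subseteq B_{1}$. Hence $B$ is prime.

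The only subtle point I anticipate is in the forward direction, namely checking that $B_{1}\cap B_{2}$ is actually non-empty so that the remark about intersections of bi-ideals applies and primality can be invoked; this is where using idempotence of $B_{1}$, $B_{2}$ (via Lemma 1) to embed $B_{1}B_{2}$ into $B_{1}\cap B_{2}$ is essential. Once this is in place, the remainder is essentially a bookkeeping exercise.
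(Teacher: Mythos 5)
Your proposal is correct and follows essentially the same route as the paper: primality of $B^{2}$ yields idempotence, Lemma 1 converts idempotent bi-ideals into ideals so that $B_{1}B_{2}\subseteq B_{1}\cap B_{2}$, primality of the intersection gives the total order, and the converse is the same one-line computation $B_{1}=B_{1}^{2}\subseteq B_{1}B_{2}\subseteq B$. You merely spell out a few details the paper leaves implicit, such as the non-emptiness of $B_{1}\cap B_{2}$ and the exact role of Lemma 1.
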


\begin{proof}
Assume that each bi-ideal of $S$ is prime. Since $B^{2}$ is an ideal and so
is prime which implies that $B\subseteq B^{2}$, hence $B$ is idempotent.
Since $B_{1}\cap B_{2}$ is a bi-ideal of $S$ (where $B_{1}$ and $B_{2}$ are
bi-ideals of $S$) and so is prime, now by Lemma $1$, either $B_{1}\subseteq
B_{1}\cap B_{2}$ or $B_{2}\subseteq B_{1}\cap B_{2}$ which further implies
that either $B_{1}\subseteq B_{2}$ or $B_{2}\subseteq B_{1}$. Hence the set
of bi-ideals of $S$ is totally ordered under inclusion.

Conversely, assume that every bi-ideal of $S$ is idempotent and the set of
bi-ideals of $S$ is totally ordered under inclusion. Let $B_{1}$, $B_{2}$
and $B$ be the bi-ideals of $S$ with $B_{1}B_{2}\subseteq B$ and without
loss of generality assume that $B_{1}\subseteq B_{2}$. Since $B_{1}$ is an
idempotent, so $B_{1}=B_{1}B_{1}\subseteq B_{1}B_{2}\subseteq B$ implies
that $B_{1}\subseteq B$ and hence each bi-ideal of $S$ is prime.
\end{proof}

A bi-ideal $B$ of an AG-groupoid $S$ is called strongly irreducible bi-ideal
if $B_{1}\cap B_{2}\subseteq B$ implies either $B_{1}\subseteq B$ or $%
B_{2}\subseteq B$ for every bi-ideal $B_{1}$ and $B_{2}$ of $S$.

\begin{theorem}
Let \DH\ be the set of all bi-ideals of an AG-groupoid $S$ with zero and $%
\Omega $ be the set of all strongly irreducible proper bi-ideals of $S$,
then $\Gamma (\Omega )=\{O_{B}:B\in \text{\DH }\}$, form a topology on the
set $\Omega $, where $O_{B}=\{J\in \Omega ;B\nsubseteq J\}$ and $\phi :$
bi-ideal$(S)\longrightarrow \Gamma (\Omega )$ preserves finite intersection
and arbitrary union between the set of bi-ideals of $S$ and open subsets of $%
\Omega $.
\end{theorem}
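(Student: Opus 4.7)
The plan is to verify the three topology axioms for $\Gamma(\Omega)$ while simultaneously reading off the preservation properties of $\phi$. My strategy rests on the tautology that $J\in O_{B}$ iff $B\nsubseteq J$, which converts every set-theoretic operation on the sets $O_{B}$ into a containment relation among bi-ideals.

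First I would dispose of the extreme cases. Because $0\in S$ satisfies $0s=s0=0$, the singleton $\{0\}$ is itself a bi-ideal; and for any nonempty bi-ideal $J$, fixing $b\in J$ gives $(b\cdot 0)\cdot b=0\cdot b=0\in (JS)J\subseteq J$, so $\{0\}\subseteq J$ for every $J\in\Omega$. This yields $O_{\{0\}}=\emptyset$. Since each $J\in\Omega$ is proper, $S\nsubseteq J$, so $O_{S}=\Omega$. Both $\emptyset$ and $\Omega$ therefore lie in $\Gamma(\Omega)$.

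The heart of the argument consists of the two identities
\[
O_{A}\cap O_{B}=O_{A\cap B}\qquad\text{and}\qquad\bigcup_{\alpha}O_{B_{\alpha}}=O_{B_{\ast}},
\]
where $B_{\ast}$ denotes the bi-ideal of $S$ generated by $\bigcup_{\alpha}B_{\alpha}$. The inclusion $O_{A\cap B}\subseteq O_{A}\cap O_{B}$ is immediate from $A\cap B\subseteq A,B$; the reverse inclusion is precisely where the strong irreducibility of $J\in\Omega$ is used, in its contrapositive form: if $A\nsubseteq J$ and $B\nsubseteq J$ then $A\cap B\nsubseteq J$. I would also need to observe that $A\cap B$ is a genuine bi-ideal, which is ensured by $0\in A\cap B$ together with the earlier remark that nonempty intersections of bi-ideals are bi-ideals. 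For unions, $B_{\alpha}\subseteq B_{\ast}$ gives the forward direction, while if every $B_{\alpha}\subseteq J$ then $\bigcup_{\alpha}B_{\alpha}\subseteq J$, and since $J$ is itself a bi-ideal containing this union we conclude $B_{\ast}\subseteq J$. These two identities simultaneously close $\Gamma(\Omega)$ under finite intersection and arbitrary union, and they express exactly that the map $\phi\colon B\mapsto O_{B}$ carries intersections of bi-ideals to intersections of open sets and joins of bi-ideals to unions of open sets.

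The main obstacle is the intersection identity, since it is the sole place where strong irreducibility is actually invoked; everything else amounts to bookkeeping with inclusions. A subtle point I would be careful with is checking at each step that the bi-ideal appearing as a subscript (such as $A\cap B$ or $B_{\ast}$) is genuinely a bi-ideal of $S$, so that the corresponding $O$-set is a member of $\Gamma(\Omega)$ by definition rather than by some ad hoc extension of the notation.
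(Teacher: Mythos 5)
Your proposal is correct and follows essentially the same route as the paper: the extreme cases $O_{\{0\}}=\emptyset$ and $O_{S}=\Omega$, the identity $\bigcup_{\alpha}O_{B_{\alpha}}=O_{\langle\cup B_{\alpha}\rangle}$ via the generated bi-ideal, and the identity $O_{B_{1}}\cap O_{B_{2}}=O_{B_{1}\cap B_{2}}$ with strong irreducibility invoked in contrapositive form exactly where the paper invokes it. Your explicit checks that $0$ lies in every nonempty bi-ideal and that $A\cap B$ is itself a bi-ideal are details the paper relegates to earlier remarks, but they do not change the argument.
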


\begin{proof}
As $\{0\}$ is a bi-ideal of $S$, and $0$ belongs to every bi-ideal of $S$,
then $O_{\{0\}}=\{J\in \Omega ,\{0\}\nsubseteq J\}=\{\}$, also $O_{S}=\{J\in
\Omega ,$ $S\nsubseteq J\}=\Omega $ which is the first axiom for the
topology. Let $\{O_{B_{\alpha }}:\alpha \in I\}\subseteq \Gamma (\Omega )$,
then $\cup O_{B_{\alpha }}=\{J\in \Omega ,$ $B_{\alpha }\nsubseteq J$, for
some $\alpha \in I\}=\{J\in \Omega ,$ $<\cup B_{\alpha }>$ $\nsubseteq
J\}=O_{\cup B_{\alpha }}$, where\ $<\cup B_{\alpha }>$\ is a bi-ideal of $S$
generated by $\cup B_{\alpha }$. Let $O_{B_{1}}$ and $O_{B_{2}}\in \Gamma
(\Omega )$, if $J\in O_{B_{1}}\cap O_{B_{2}}$, then $J\in \Omega $ and $%
B_{1}\nsubseteq J$, $B_{2}\nsubseteq J$. Suppose $B_{1}\cap B_{2}\subseteq J$%
, this implies that either $B_{1}\subseteq J$ or $B_{2}\subseteq J$, which
leads us to a contradiction. Hence $B_{1}\cap B_{2}\nsubseteq J$ which
further implies that $J\in O_{B_{1}\cap B_{2}}$. Thus $O_{B_{1}}\cap
O_{B_{2}}\subseteq O_{B_{1}\cap B_{2}}$. Now if $J\in O_{B_{1}\cap B_{2}}$,
then $J\in \Omega $ and $B_{1}\cap B_{2}\nsubseteq J$. Thus $J\in O_{B_{1}}$
and $J\in O_{B_{2}}$, therefore $J\in O_{B_{1}}\cap O_{B_{2}}$, which
implies that $O_{B_{1}\cap B_{2}}\subseteq O_{B_{1}}\cap O_{B_{2}}$. Hence $%
\Gamma (\Omega )$ is the topology on $\Omega $. Define $\phi :$ bi-ideal$%
(S)\longrightarrow \Gamma (\Omega )$ by $\phi (B)=O_{B}$, then it is easy to
note that $\phi $ preserves finite intersection and arbitrary union.
\end{proof}

An ideal $P$ of an AG-groupoid $S$ is called prime if $AB\subseteq P$
implies that either $A\subseteq P$ or $B\subseteq P$ for all ideals $A$ and $%
B$ in $S$.

Let $P_{S}$ denote the set of proper prime ideals of an AG-groupoid $S$
absorbing $0$. For an ideal $I$ of $S$ define the set $\Theta _{I}=\{$ $J\in
P_{S}:I\nsubseteq J\}$ and $\Gamma (P_{S})=\{\Theta _{I},$ $I$ is an ideal
of $S\}$.

\begin{theorem}
Let $S$ is an AG-groupoid with $0$. The set $\Gamma (P_{S})$\ constitute a
topology on the set $P_{S}$.
\end{theorem}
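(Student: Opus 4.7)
The plan is to verify the three axioms of a topology for $\Gamma(P_S)$ on $P_S$, mirroring the structure of the preceding Theorem about strongly irreducible bi-ideals. The hypotheses that every element of $P_S$ is proper and absorbs $0$ are exactly what is needed for the empty set and the full space to arise, while primeness will do the real work for finite intersections.

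First I would handle the boundary cases. Since $\{0\}$ is an ideal of $S$ and every $J\in P_S$ contains $0$, one has $\{0\}\subseteq J$ for all such $J$, so $\Theta_{\{0\}}=\emptyset\in\Gamma(P_S)$. Taking $I=S$, every $J\in P_S$ is proper, so $S\not\subseteq J$, giving $\Theta_S=P_S\in\Gamma(P_S)$.

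Next, for an arbitrary family $\{\Theta_{I_\alpha}\}_{\alpha\in\Lambda}\subseteq\Gamma(P_S)$, I would note that $J\in\bigcup_\alpha\Theta_{I_\alpha}$ iff some $I_\alpha$ is not contained in $J$, which happens iff the ideal $\langle\bigcup_\alpha I_\alpha\rangle$ generated by the union is not contained in $J$. Thus $\bigcup_\alpha\Theta_{I_\alpha}=\Theta_{\langle\bigcup_\alpha I_\alpha\rangle}\in\Gamma(P_S)$, exactly as in the bi-ideal case.

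The main work lies in showing $\Theta_{I_1}\cap\Theta_{I_2}=\Theta_{I_1\cap I_2}$, and this is where I expect the only real obstacle. The inclusion $\Theta_{I_1\cap I_2}\subseteq\Theta_{I_1}\cap\Theta_{I_2}$ is immediate: any witness $x\in I_1\cap I_2$ with $x\notin J$ lies in both $I_1$ and $I_2$. For the reverse inclusion, suppose $J\in\Theta_{I_1}\cap\Theta_{I_2}$ but $I_1\cap I_2\subseteq J$. Since $I_1,I_2$ are two-sided ideals, $I_1I_2\subseteq I_1 S\subseteq I_1$ and $I_1I_2\subseteq SI_2\subseteq I_2$, hence $I_1I_2\subseteq I_1\cap I_2\subseteq J$. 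Primeness of $J$ then forces $I_1\subseteq J$ or $I_2\subseteq J$, contradicting $J\in\Theta_{I_1}\cap\Theta_{I_2}$. The crux is the passage from intersection to product via two-sidedness of the ideals, after which primeness applies directly; this is the only step that uses structural information about $S$, and it is notably cleaner than the bi-ideal analogue because there is no need to invoke strong irreducibility.
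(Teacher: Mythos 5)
Your proof is correct and follows essentially the same route as the paper's: the paper verifies the finite-intersection axiom by the same contradiction argument (merely asserting, rather than justifying via $I_1I_2\subseteq I_1\cap I_2$, that primeness of $J$ handles $I_1\cap I_2\subseteq J$) and delegates the empty set, whole space, and arbitrary unions to the preceding theorem on bi-ideals, exactly the steps you spell out. Your explicit passage from intersection to product is the detail the paper leaves implicit, so the argument matches.
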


\begin{proof}
Let $\Theta _{I_{1}}$, $\Theta _{I_{2}}\in \Gamma (P_{S})$, if $J\in \Theta
_{I_{1}}\cap \Theta _{I_{2}}$, then $J\in P_{S}$ and $I_{1}\nsubseteq J$ and 
$I_{2}\nsubseteq J$. Let $I_{1}\cap I_{2}\subseteq J$ which implies that
either $I_{1}\subseteq J$ or $I_{2}\subseteq J$, which is contradiction.
Hence $J\in \Theta _{I_{1}\cap I_{2}}$. Similarly $\Theta _{I_{1}\cap
I_{2}}\subseteq \Theta _{I_{1}}\cap \Theta _{I_{2}}$. The remaining proof
follows from Theorem 2.
\end{proof}

The assignment $I\longrightarrow \Theta _{I}$ preserves finite intersection
and arbitrary union between the ideal$(S)$ and their corresponding open
subsets of $\Theta _{I}$.

Let $P$ be a left ideal of an AG-groupoid $S$. $P$ is called quasi-prime if
for left ideals $A$, $B$ of $S$ such that $AB\subseteq P$, we have $%
A\subseteq P$ or $B\subseteq P$.

\begin{theorem}
If $S$ is an AG-groupoid $S$ with left identity $e$, then a left ideal $P$
of $S$ is quasi-prime if and only if $(Sa)b\subseteq P$ implies that either $%
a\in P$ or $b\in P$.
\end{theorem}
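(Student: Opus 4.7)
The plan is to connect the hypothesis $(Sa)b\subseteq P$ to the definition of quasi-primeness by using the principal left ideals $Sa$ and $Sb$. Before starting either direction, I would record two preliminary facts: since $e$ is a left identity, $a=ea\in Sa$ and $b=eb\in Sb$; and $Sa$ is actually a left ideal of $S$. The latter is not stated in the excerpt (which only calls it a bi-ideal), but follows from a short calculation: for any $s,s'\in S$,
\[
s(s'a)=(es)(s'a)=(as)(s'e)=\bigl((s'e)s\bigr)a\in Sa,
\]
where the middle equality uses (3) and the last uses (1). This gives $S(Sa)\subseteq Sa$.

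For the forward direction, I would assume $P$ is quasi-prime with $(Sa)b\subseteq P$. Taking $s=e$ shows $ab=(ea)b\in P$. The medial law (2) then gives
\[
(Sa)(Sb)=(SS)(ab)=S(ab),
\]
and since $P$ is a left ideal containing $ab$, this yields $(Sa)(Sb)\subseteq P$. Applying quasi-primeness to the left ideals $Sa$ and $Sb$ forces $Sa\subseteq P$ or $Sb\subseteq P$, and because $a\in Sa$ and $b\in Sb$, we conclude $a\in P$ or $b\in P$.

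For the converse I would argue by contraposition. Assume the displayed implication, and let $A,B$ be left ideals with $AB\subseteq P$ and $A\not\subseteq P$. Pick $a\in A\setminus P$ and any $b\in B$. Since $A$ is a left ideal, $Sa\subseteq A$, hence $(Sa)b\subseteq Ab\subseteq AB\subseteq P$. The hypothesis then gives $a\in P$ or $b\in P$; as $a\notin P$, we obtain $b\in P$. Since $b\in B$ was arbitrary, $B\subseteq P$, so $P$ is quasi-prime.

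I do not expect any serious difficulty in the argument itself, which is essentially an inclusion chase. The one delicate point, already flagged, is that the forward direction relies on $Sa$ being a left ideal (not merely a bi-ideal as the paper has so far stated); verifying this via the identities (1) and (3) is the only nontrivial step, and the presence of the left identity $e$ is what makes it work.
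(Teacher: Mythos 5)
Your proof is correct and follows essentially the same route as the paper's: both directions reduce to the principal left ideals $Sa$ and $Sb$, show $(Sa)(Sb)\subseteq P$ via the medial law (the paper writes $(Sa)(Sb)=S((Sa)b)\subseteq SP\subseteq P$ where you go through $ab\in P$ and $(Sa)(Sb)=S(ab)$), and the converse is identical. Your explicit check that $Sa$ is a left ideal and that $a\in Sa$, $b\in Sb$ fills in details the paper leaves implicit.
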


\begin{proof}
Let $P$ be a left ideal of an AG-groupoid $S$ with left identity $e$. Now
assume that $(Sa)b\subseteq P$, then%
\begin{eqnarray*}
S((Sa)b) &\subseteq &SP\subseteq P\text{, that is} \\
S((Sa)b) &=&(Sa)(Sb)
\end{eqnarray*}

Hence, either $a\in P$ or $b\in P$.

Conversely, assume that $AB\subseteq P$ where $A$ and $B$ are left of $S$
such that $A\nsubseteq P$. Then there exists $x\in A$ such that $x\notin P$.
Now using the hypothesis we get $(Sx)y\subseteq (SA)B\subseteq AB\subseteq P$
for all $y\in B$. Since $x\notin P$, so by hypothesis, $y\in P$ for all $%
y\in B$, we obtain $B\subseteq P$. This shows that $P$ is quasi-prime.
\end{proof}

An AG-groupoid $S$ is called an anti-rectangular if $a=(ba)b$, for all $a$,$%
b $ in $S$. It is easy to see that $S=S^{2}$. In the following results for
an anti-rectangular AG-groupoid $S$, $e\notin S$.

\begin{proposition}
If $A$ and $B$ are the ideals of an anti-rectangular AG-groupoid $S$, then $%
AB$ is an ideal.
\end{proposition}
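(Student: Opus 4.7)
The plan is to verify directly the two defining inclusions $S(AB)\subseteq AB$ and $(AB)S\subseteq AB$, using two ingredients already at hand: the fact that an anti-rectangular AG-groupoid satisfies $S=S^{2}$, as noted just before the statement, and the medial law $(2)$, which lets us rearrange a product of four factors.

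For the left absorption, I would first rewrite $S(AB)$ as $(SS)(AB)$, which is legitimate because $S=S^{2}$. Applying the medial law to $(SS)(AB)$ produces $(SA)(SB)$. Since $A$ and $B$ are ideals we have $SA\subseteq A$ and $SB\subseteq B$, so $(SA)(SB)\subseteq AB$, yielding $S(AB)\subseteq AB$.

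The right absorption is entirely symmetric: write $(AB)S=(AB)(SS)$, again using $S=S^{2}$, then apply $(2)$ to get $(AS)(BS)$. From $AS\subseteq A$ and $BS\subseteq B$ we conclude $(AS)(BS)\subseteq AB$, hence $(AB)S\subseteq AB$. Together with the previous step this shows $AB$ is a two-sided ideal of $S$.

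There is no real obstacle here; the entire argument is two lines of rearrangement. The only subtle point worth emphasizing is the role of $S=S^{2}$, which fails in an arbitrary AG-groupoid and is precisely what allows us to insert an extra factor of $S$ so that the medial law can be brought to bear. Without that identity the naive attempt $S(AB)\subseteq \,?$ has no handle on which to apply $(2)$.
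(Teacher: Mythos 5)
Your proof is correct and is essentially identical to the paper's: both rewrite $S(AB)$ as $(SS)(AB)$ and $(AB)S$ as $(AB)(SS)$ using $S=S^{2}$, then apply the medial law and the ideal property of $A$ and $B$. Your remark on why $S=S^{2}$ is the essential enabling fact is a nice touch that the paper leaves implicit.
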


\begin{proof}
Using (2), we get%
\begin{eqnarray*}
(AB)S &=&(AB)(SS)=(AS)(BS)\subseteq AB,\text{ also} \\
S(AB) &=&(SS)(AB)=(SA)(SB)\subseteq AB
\end{eqnarray*}

which shows that $AB$ is an ideal.
\end{proof}

Consequently, if $I_{1}$, $I_{2}$, $I_{3}$,$...$ and $I_{n}$ are ideals of $%
S $, then 
\begin{equation*}
(...((I_{1}I_{2})I_{3})...I_{n})\text{ and }%
(...((I_{1}^{2}I_{2}^{2})I_{3}^{2})...I_{n}^{2})
\end{equation*}%
are ideals of $S$ and the set $\circledS _{I}$ of ideals of $S$ form an
anti-rectangular AG-groupoid.

\begin{lemma}
Any subset of an anti-rectangular AG-groupoid $S$ is left ideal if and only
if it is right.
\end{lemma}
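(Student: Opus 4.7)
The plan is to exploit the anti-rectangular identity $a=(ba)b$ together with the left invertive law $(xy)z=(zy)x$ to rewrite a one-sided product $as$ (respectively $sa$) as a product lying in the other sided translate of the given set. Both directions have the same flavour: expand one factor via anti-rectangularity, then flip the parenthesisation with the left invertive law.

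For the forward direction, suppose $L$ is a left ideal and take $a\in L$, $s\in S$; I want $as\in L$. Pick any $b\in S$ and use $a=(ba)b$ to write $as=((ba)b)s$. Applying the left invertive law (with $x=ba$, $y=b$, $z=s$) gives $as=(sb)(ba)$. Now $ba\in SL\subseteq L$ because $L$ is a left ideal, so $(sb)(ba)\in S\cdot L\subseteq L$, proving $LS\subseteq L$.

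For the converse, suppose $R$ is a right ideal and take $a\in R$, $s\in S$; I want $sa\in R$. This time I apply anti-rectangularity to $s$ rather than $a$: for any $t\in S$, $s=(ts)t$, so $sa=((ts)t)a$. The left invertive law (with $x=ts$, $y=t$, $z=a$) yields $sa=(at)(ts)$. Since $at\in RS\subseteq R$, we have $(at)(ts)\in R\cdot S\subseteq R$, whence $SR\subseteq R$.

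I do not anticipate a real obstacle here; the proof is a one-line identity manipulation in each direction. The only mild subtlety is choosing which factor to expand via $a=(ba)b$: expanding the \emph{left} factor creates the desired one-sided product on the inner term, and this choice is forced by whether one starts from a left or a right ideal. No further AG-groupoid identities (medial, $(3)$, etc.) are needed.
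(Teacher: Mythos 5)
Your proof is correct and is essentially the paper's own argument: in each direction you expand the left factor via anti-rectangularity and then apply the left invertive law, exactly as the paper does (with the two directions merely presented in the opposite order). No issues.
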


\begin{proof}
Let $I$ be a right ideal of $S$, then using (1), we get, $%
si=((xs)x)i=(ix)(xs)\in I.$

Conversely, suppose that $I$ be a left ideal of $S$, then using (1), we get, 
$is=((yi)y)s=(sy)(yi)\in I.$
\end{proof}

It is fact that $SI=IS$. From above Lemma we remark that, each quasi prime
ideals becomes prime in an anti-rectangular AG-groupoid.

\begin{lemma}
If $I$ is an ideal of an anti-rectangular AG-groupoid $S$ then, $H(a)=\{x\in
S:(xa)x=a$, for $a\in I\}\subseteq I$.
\end{lemma}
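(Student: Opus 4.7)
The plan is to bypass the defining condition $(xa)x=a$ of $H(a)$ altogether and instead apply the anti-rectangular identity \emph{to $x$ itself}. The anti-rectangular law says $u=(vu)v$ for all $u,v\in S$, so specialising $u\mapsto x$ and $v\mapsto a$ gives the representation $x=(ax)a$. Once I have this, the membership $x\in I$ should drop out from two applications of the right-ideal condition $IS\subseteq I$, using the hypothesis $a\in I$.

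Concretely, I would fix $a\in I$ and $x\in H(a)$ and proceed as follows. First, since $a\in I$ and $x\in S$, the product $ax$ lies in $IS\subseteq I$. Next, since now $ax\in I$ and $a\in S$, a second application of $IS\subseteq I$ yields $(ax)a\in I$. Combining with $x=(ax)a$ produced by anti-rectangularity, I conclude $x\in I$; as $x$ was arbitrary in $H(a)$, this gives $H(a)\subseteq I$.

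The only conceptual point worth flagging—rather than a real obstacle—is that the set-builder condition $(xa)x=a$ is automatically satisfied by \emph{every} $x\in S$ under anti-rectangularity, so $H(a)=S$ whenever $a\in I$. Consequently the lemma is really asserting the stronger (and essentially trivial) fact that the only nonempty two-sided ideal of an anti-rectangular AG-groupoid is $S$ itself, which is exactly what the two-step absorption argument above establishes. No delicate computation is required: one merely needs to recognize the anti-rectangular law as a device for writing $x$ as a product with $a$ appearing on both sides, and to invoke the ideal property twice. (If one prefers, by Lemma~2 the left/right distinction for ideals in $S$ is immaterial, so either $SI\subseteq I$ or $IS\subseteq I$ could be quoted at each step.)
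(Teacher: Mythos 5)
Your proof is correct, but it takes a genuinely different route from the paper's, and the comparison is instructive. The paper's argument takes $y\in H(a)$, writes $y=(ya)y$, and concludes $y\in (SI)S\subseteq I$; that step actually uses the membership condition in the form $(ya)y=y$, i.e.\ it proves the variant $H(a)=\{x\in S:(xa)x=x\}\subseteq I$ that the paper appends at the end of its own proof. For the definition as literally stated, $(xa)x=a$, the paper's identification $y=(ya)y$ does not follow (membership gives $(ya)y=a$, not $y$). You instead observe, correctly, that $(xa)x=a$ is nothing but the anti-rectangular identity itself, so $H(a)=S$, and you then prove the stronger statement that any ideal containing an element $a$ must be all of $S$, via $x=(ax)a\in (IS)S\subseteq I$. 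Both computations are sound two-step absorption arguments; yours is the one that actually establishes the lemma as written, at the cost of exposing that the lemma is degenerate under that reading (every nonempty ideal of an anti-rectangular AG-groupoid is $S$), while the paper's computation is the right one for the $(xa)x=x$ variant, where $H(a)$ is a genuinely proper subset in general. One trivial slip: the left/right interchange you invoke is the paper's Lemma~3, not Lemma~2, though your argument never actually needs it since $I$ is assumed two-sided.
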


\begin{proof}
Let $y\in H(a)$, then $y=(ya)y\in (SI)S\subseteq I$. Hence $H(a)\subseteq I$.

Also $H(a)=\{x\in S:(xa)x=x$, for $a\in I\}\subseteq I$.
\end{proof}

An ideal $I$ of an AG-groupoid $S$ is called an idempotent if $I^{2}=I$. An
AG-groupoid $S$ is said to be fully idempotent if every ideal of $S$ is
idempotent.

\begin{proposition}
If $S$ is an anti-rectangular AG-groupoid and $A$, $B$ are ideals of $S$,
then the following assertions are equivalent.

$(i)$ $S$ is fully idempotent,

$(ii)$ $A\cap B=AB$, and

$(iii)$ the ideals of $S$ form a semilattice $(L_{S},\wedge )$ where $%
A\wedge B=AB$.
\end{proposition}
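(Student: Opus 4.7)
The plan is to run the same three-step implication chain used in Proposition 3, substituting \emph{ideal} for \emph{bi-ideal} throughout and noting that the anti-rectangular hypothesis plays only the background role of providing a well-behaved product of ideals (via Proposition 4), so that $AB$ makes sense as an ideal on the left-hand sides.

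For $(i)\Rightarrow(ii)$, I would first establish the easy inclusion $AB\subseteq A\cap B$ directly from the defining property of ideals: $AB\subseteq AS\subseteq A$ and $AB\subseteq SB\subseteq B$. For the reverse inclusion, the intersection $A\cap B$ is itself an ideal, since $(A\cap B)S\subseteq AS\cap BS\subseteq A\cap B$ and $S(A\cap B)\subseteq SA\cap SB\subseteq A\cap B$. By full idempotence, $A\cap B=(A\cap B)^{2}$, and since $A\cap B$ is contained in both $A$ and $B$, this gives $A\cap B\subseteq AB$.

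For $(ii)\Rightarrow(iii)$, commutativity of $\wedge$ follows from $AB=A\cap B=B\cap A=BA$; idempotence from $A\wedge A=AA=A\cap A=A$; and associativity reduces to the set-theoretic associativity of intersection, since $(A\wedge B)\wedge C=(A\cap B)\cap C=A\cap(B\cap C)=A\wedge(B\wedge C)$. Hence $(L_{S},\wedge)$ is a semilattice. For $(iii)\Rightarrow(i)$, idempotence of each ideal $A$ is immediate: $A=A\wedge A=AA=A^{2}$.

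I do not anticipate a real obstacle, as the argument is a clean transcription of Proposition 3 to the ideal/anti-rectangular setting; the only small point worth recording explicitly is that intersections of ideals are again ideals, and the product $AB$ is an ideal by Proposition 4, which together ensure every expression in the chain stays within $L_{S}$.
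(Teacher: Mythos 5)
Your proof is correct and takes essentially the same route as the paper, which simply declares that the result ``follows from Proposition 3''; your write-up is exactly that transcription to the ideal setting, with the welcome extra care of checking that $A\cap B$ and $AB$ are again ideals so every term in the chain stays in $L_{S}$. The only cosmetic difference is that you obtain $AB\subseteq A\cap B$ directly from $AB\subseteq AS\subseteq A$ and $AB\subseteq SB\subseteq B$ rather than via the paper's Lemma on idempotent bi-ideals, which is a harmless simplification.
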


The proof follows from Proposition $3$.

The set of ideals of $S$ is totally ordered under inclusion if for all
ideals $I$, $J$ either $I\subseteq J$ or $J\subseteq I$ and denoted by ideal$%
(S)$.

\begin{theorem}
Every ideal of an anti-rectangular AG-groupoid $S$ is prime\ if and only it
is idempotent and ideal$(S)$ is totally ordered under inclusion.
\end{theorem}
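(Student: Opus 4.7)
The plan is to mirror the argument of Theorem~1 (the bi-ideal case) almost verbatim, replacing bi-ideals by ideals and invoking the ideal-theoretic analogues established earlier in this section (Proposition~4 and Proposition~5).

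For the forward direction, I would assume every ideal of $S$ is prime. Given an ideal $I$, Proposition~4 tells me $I^{2}$ is again an ideal, hence prime by assumption. Since $I\cdot I\subseteq I^{2}$, primality of $I^{2}$ forces $I\subseteq I^{2}$, and the reverse containment $I^{2}\subseteq I$ is immediate because $I$ is an ideal; thus $I=I^{2}$, establishing idempotence. To get total order, I would pick two ideals $B_{1}$, $B_{2}$ and apply primality to their intersection $B_{1}\cap B_{2}$ (which is an ideal): from $B_{1}B_{2}\subseteq B_{1}\cap B_{2}$ primality yields $B_{1}\subseteq B_{1}\cap B_{2}$ or $B_{2}\subseteq B_{1}\cap B_{2}$, that is, $B_{1}\subseteq B_{2}$ or $B_{2}\subseteq B_{1}$.

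For the converse, I would assume every ideal is idempotent and that ideal$(S)$ is totally ordered under inclusion. Take ideals $B_{1}$, $B_{2}$, $B$ with $B_{1}B_{2}\subseteq B$. By the total order assumption I may assume without loss of generality $B_{1}\subseteq B_{2}$. Idempotence of $B_{1}$ then gives
\begin{equation*}
B_{1}=B_{1}B_{1}\subseteq B_{1}B_{2}\subseteq B,
\end{equation*}
so $B_{1}\subseteq B$, proving $B$ is prime.

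I do not anticipate a genuine obstacle: the only non-trivial ingredients are that the intersection of two ideals is an ideal (routine from the definitions, since an intersection of left/right ideals is again a left/right ideal), that $I^{2}$ is an ideal whenever $I$ is (Proposition~4), and that $B_{1}B_{2}\subseteq B_{1}\cap B_{2}$ whenever $B_{1}$, $B_{2}$ are ideals (immediate: $B_{1}B_{2}\subseteq B_{1}S\subseteq B_{1}$ and $B_{1}B_{2}\subseteq SB_{2}\subseteq B_{2}$). The one place to be a touch careful is the step ``$I^{2}\subseteq I$'': in the absence of a left identity one uses only $IS\subseteq I$, which is part of the ideal hypothesis, so this goes through in the anti-rectangular setting as well.
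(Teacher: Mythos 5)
Your proposal is correct and is essentially the paper's own argument: the paper simply states that the proof ``follows from Theorem 1,'' and what you have written is precisely that bi-ideal argument transcribed to ideals, with the auxiliary facts ($I^{2}$ an ideal, intersections of ideals are ideals, $B_{1}B_{2}\subseteq B_{1}\cap B_{2}$) checked along the way. No gap.
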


The proof follows from Theorem $1$.

\end{document}